\theoremstyle{plain}
\newtheorem{theorem}{Theorem}[section]
\newtheorem{proposition}[theorem]{Proposition}
\newtheorem{lemma}[theorem]{Lemma}
\newtheorem{definition}[theorem]{Definition}
\theoremstyle{remark}
\newtheorem{remark}[theorem]{Remark}
\begin{document}

\title[Rectangle condition and $2$-fold branched cover]
{Rectangle condition for compression body and $2$-fold branched covering}

\author{Jungsoo Kim}
\address{Department of Mathematics\\
Konkuk University\\
Seoul, Korea}
\email{pibonazi@gmail.com}

\author{Jung Hoon Lee}
\address{School of Mathematics, KIAS\\
207-43, Cheongnyangni 2-dong, Dongdaemun-gu\\
Seoul, Korea}
\email{jhlee@kias.re.kr}


\subjclass[2000]{Primary 57M50, 57M25} \keywords{rectangle condition,
$2$-fold branched covering, generalized Heegaard splitting, thin position, width}

\begin{abstract}
We give the rectangle condition for strong irreducibility of Heegaard splittings of $3$-manifolds with non-empty boundary.
We apply this to a generalized Heegaard splitting of a $2$-fold covering of $S^3$ branched along a link.
The condition implies that any thin meridional level surface in the link complement is incompressible. We also show that the additivity of knot width holds for a composite knot satisfying the condition.
\end{abstract}

\maketitle

\section{Introduction}

A {\it compression body} $V$ is a $3$-manifold obtained from a closed surface $S$ by attaching $2$-handles to $S\times I$ on $S\times \{1\}$ and capping off any resulting $2$-sphere boundary components with $3$-balls. $S\times\{0\}$ is denoted by $\partial_+ V$ and $\partial V-\partial_+ V$ is denoted by $\partial_- V$. A {\it Heegaard splitting} $M=V\cup_S W$ is a decomposition of a $3$-manifold $M$ into two compression bodies $V$ and $W$, where $S=\partial_+V=\partial_+ W$. Every compact $3$-manifold admits Heegaard splittings.

A Heegaard splitting $V\cup_S W$ is {\it strongly irreducible} if for any essential disk $D\subset V$ and $E\subset W$, $\partial D$ intersects $\partial E$. Suppose a Heegaard splitting $V\cup_S W$ of a closed $3$-manifold is given with the information that how certain collections of essential disks of $V$ and $W$ intersect. Concerning this, there is the so-called {\bf rectangle condition}, due to Casson and Gordon, which implies that the given splitting is strongly irreducible \cite{Casson-Gordon}. See also \cite{Kobayashi}.

The paper is organized as follows.
In Section $2$, we review the rectangle condition for strong irreducibility of Heegaard splittings of closed $3$-manifolds. In Section $3$, we consider the rectangle condition for Heegaard splittings of $3$-manifolds with non-empty boundary.
In Section $4$, we introduce generalized Heegaard splitting and thin position of knots and links.
In Section $5$, we apply the rectangle condition to a generalized Heegaard splittings of a $2$-fold branched covering of $S^3$. Using the result of Scharlemann and Thompson \cite{Scharlemann-Thompson}, the condition implies that any thin surface in the $2$-fold branched covering is incompressible. Then it turns out that a thin meridional planar level surface in the link complement is also incompressible under the rectangle condition. In Section 6, we apply the rectangle condition to the additivity of knot width. If a composite knot in thin position satisfies the rectangle condition, then the width of the knot is additive with respect to connected sum.

\section{Rectangle condition: closed case}

Let $V$ be a genus $g\ge 2$ handlebody and let $S=\partial V$. Suppose a collection of $3g-3$ mutually disjoint, non-isotopic essential disks $\{D_1,\ldots,D_{3g-3}\}$ cuts $V$ into a collection of $2g-2$ balls $\{B_1,\ldots,B_{2g-2}\}$, where the shape of each $B_i$ is like a solid pair of pants. Let $P_i$ be the pair of pants $B_i\cap S$ $(i=1,\ldots,2g-2)$. Then $S=P_1\cup\ldots\cup P_{2g-2}$ is a {\it pants decomposition} of $S$.

Let $D$ be an essential disk in $V$. Assume that $D$ intersects
$\underset{i=1}{\overset{3g-3}{\bigcup}} D_i$ minimally. Hence we can see that there is no circle component in the intersection $D\cap (\underset{i=1}{\overset{3g-3}{\bigcup}} D_i)$ since $V$ is irreducible.

\begin{definition}
A {\it wave} $\alpha(D)$ for an essential disk $D$ in $V$ is a subarc of $\partial D$ cut by $\underset{i=1}{\overset{3g-3}{\bigcup}} D_i$ satisfying the following conditions.

\begin{itemize}
\item There exists an outermost arc $\beta$ and a corresponding outermost disk $\Delta$ of $D$ with $\partial\Delta=\alpha(D)\cup\beta$.
\item $(\alpha(D),\partial\alpha(D))$ is not isotopic, in $S$, into $\partial D_i$ containing $\partial\alpha(D)$.
\end{itemize}
\end{definition}

\begin{lemma}
Suppose $D$ is not isotopic to any $D_i$ $(i=1,\ldots,3g-3)$. Then $\partial D$ contains a wave.
\end{lemma}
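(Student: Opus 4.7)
The plan is to use the minimality of $|D \cap \bigcup_i D_i|$ together with the irreducibility of the handlebody $V$ in a standard outermost-disk argument. First, I would check that $D$ actually meets $\bigcup_i D_i$. If $D \cap \bigcup_i D_i = \emptyset$, then $D$ is contained in some ball $B_j$ with $\partial D \subset P_j$. In a pair of pants every essential simple closed curve is parallel to a boundary component, so either $\partial D$ bounds a disk in $P_j \subset S$ (making $D$ inessential, contrary to hypothesis) or $\partial D$ is parallel in $P_j$ to some $\partial D_i$, in which case $D$ and an annulus-plus-disk in $\partial B_j$ co-bound a ball showing $D$ is isotopic in $V$ to $D_i$. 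Both alternatives are excluded by the hypotheses on $D$.

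Hence $D \cap \bigcup_i D_i$ is a non-empty finite collection of properly embedded arcs in $D$ (no circles, by the given observation), and an outermost arc $\beta \subset D_i$ (for some $i$) cuts off an outermost subdisk $\Delta \subset D$ with $\partial \Delta = \alpha \cup \beta$, $\alpha \subset \partial D$. I would then argue that some such $\alpha$ satisfies the second bullet in the wave definition. Suppose otherwise: for some outermost pair $(\beta,\Delta)$, the arc $\alpha$ is isotopic rel endpoints in $S$ into $\partial D_i$, via an arc $\alpha' \subset \partial D_i$ and a disk $R \subset S$ with $\partial R = \alpha \cup \alpha'$. The arc $\beta$ splits $D_i$ into two subdisks; let $D_i^0$ be the one with $\partial D_i^0 = \beta \cup \alpha'$. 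Then $\Delta \cup R \cup D_i^0$ is a $2$-sphere in $V$, which bounds a ball $B$ by irreducibility of the handlebody. Pushing $\Delta$ across $B$ to a parallel copy of $R \cup D_i^0$ yields an isotopy of $D$ that eliminates the intersection arc $\beta$ without introducing new intersections, contradicting the minimality of $|D \cap \bigcup_j D_j|$.

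The main obstacle is ensuring that $\Delta \cup R \cup D_i^0$ is genuinely embedded and that the ball $B$ is ``clean'': the interior of $R$ must be disjoint from $\partial D$ and from $\bigcup_j \partial D_j$, the interior of $D_i^0$ must contain no other arc of $D \cap D_i$, and $B$ must contain no components of $\bigcup_{j \ne i} D_j$ or other parts of $D$. I expect to handle this by passing to innermost/outermost subdata: given any violation, an innermost disk or outermost subarc inside $R$, $D_i^0$, or $B$ produces a strictly smaller outermost disk whose boundary subarc still falsifies the wave condition, so iterating this refinement replaces the original configuration with an embedded one. Once a clean ball $B$ is in hand, the intersection-reducing isotopy is the routine one, and the contradiction completes the proof.
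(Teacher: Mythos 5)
Your proof follows the same two-step outline as the paper's: rule out $D\cap\bigcup_i D_i=\emptyset$, then take an outermost disk and argue from minimality. The paper, however, stops at ``since $D$ meets $\bigcup_i D_i$ minimally, $\alpha(D)$ is not isotopic into $\partial D_i$,'' leaving the minimality argument implicit, whereas you unpack it via the $3$-dimensional construction $\Delta\cup R\cup D_i^0$, the ball $B$, and an intersection-reducing isotopy across $B$. Your concern about the cleanliness of $R$, $D_i^0$, and $B$ is real, and passing to innermost/outermost subconfigurations is the right repair; but the phrasing ``produces a strictly smaller outermost disk whose boundary subarc still falsifies the wave condition'' is slightly off --- an innermost arc of $\partial D$ inside $R$ need not correspond to an outermost disk of $D$, and it doesn't have to: any innermost bigon between $\partial D$ and $\bigcup_j\partial D_j$ already contradicts minimality of $|\partial D\cap\bigcup_j\partial D_j|$ (hence of $|D\cap\bigcup_j D_j|$), which is all you need. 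In fact the $3$-dimensional excursion can be skipped entirely: if the outermost subarc $\alpha$ were isotopic rel endpoints into $\partial D_i$, then $\partial D$ and $\bigcup_j\partial D_j$ would cobound a bigon in $S$; take an innermost such bigon (its interior meets neither family of curves), isotope $\partial D$ across it to reduce $|\partial D\cap\bigcup_j\partial D_j|$ by two, and then reduce $|D\cap\bigcup_j D_j|$ accordingly --- contradicting minimality without ever invoking the ball $B$. So your argument is correct in substance and matches the paper's approach, but the surface-level bigon criterion gives a cleaner route to the same contradiction.
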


\begin{proof}
Suppose $D\cap (\underset{i=1}{\overset{3g-3}{\bigcup}} D_i)=\emptyset$. Then $\partial D$ lives in a pair of pants $P_i$ for some $i$. Hence we can see that $D$ is isotopic to some $D_i$, which contradicts the hypothesis of the lemma. Therefore $D\cap (\underset{i=1}{\overset{3g-3}{\bigcup}} D_i)\ne \emptyset$.

Consider the intersection $D\cap (\underset{i=1}{\overset{3g-3}{\bigcup}} D_i)$. It is a collection of arcs.
Among them, there exists an outermost arc $\beta$ and corresponding outermost disk $\Delta$ with $\partial \Delta=\alpha(D)\cup\beta$ and $\alpha(D)\subset\partial D$. Since we assumed that $D$ intersects $\underset{i=1}{\overset{3g-3}{\bigcup}} D_i$ minimally, $\alpha(D)$ is not isotopic into $\partial D_i$ containing $\partial\alpha(D)$. Hence $\alpha(D)$ is a wave.
\end{proof}

Let $S$ be a closed genus $g\ge 2$ surface and $P_1$ and $P_2$ be pair of pants, which are subsurfaces of $S$ with $\partial P_i=a_i\cup b_i\cup c_i$ $(i=1,2)$. Assume that $\partial P_1$ and $\partial P_2$ intersect transversely.
For convenience, we introduce the following definition.

\begin{definition}
We say that $P_1$ and $P_2$ are tight if
\begin{itemize}
\item There is no bigon $\Delta$ in $P_1$ and $P_2$ with $\partial\Delta=\alpha\cup\beta$, where $\alpha$ is a subarc of $\partial P_1$ and $\beta$ is a subarc of $\partial P_2$.
\item For the pair $(a_1,b_1)$ and $(a_2,b_2)$, there is a rectangle $R$ embedded in $P_1$ and $P_2$ such that the interior of $R$ is disjoint from $\partial P_1\cup \partial P_2$ and the four edges of $\partial R$ are subarcs of $a_1,a_2,b_1,b_2$ respectively. Similar rectangles exist for the following combinations.
 $$(a_1,a_2,b_1,b_2)\quad (a_1,b_2,b_1,c_2)\quad (a_1,c_2,b_1,a_2)$$
 $$(b_1,a_2,c_1,b_2)\quad (b_1,b_2,c_1,c_2)\quad (b_1,c_2,c_1,a_2)$$
 $$(c_1,a_2,a_1,b_2)\quad (c_1,b_2,a_1,c_2)\quad (c_1,c_2,a_1,a_2)$$
\end{itemize}
\end{definition}

Let $V\cup_S W$ be a genus $g\ge 2$ Heegaard splitting of a
$3$-manifold $M$. Let $\{D_1,\ldots,D_{3g-3}\}$ be a
collection of essential disks of $V$ giving a pants
decomposition $P_1\cup\ldots\cup P_{2g-2}$ of $S$ and
$\{E_1,\ldots,E_{3g-3}\}$ be a collection of essential disks
of $W$ giving a pants decomposition $Q_1\cup\ldots\cup
Q_{2g-2}$ of $S$. Casson and Gordon introduced the rectangle
condition to show strong irreducibility of Heegaard splittings
\cite{Casson-Gordon}.

\begin{definition}
We say that $P_1\cup\ldots\cup P_{2g-2}$ and $Q_1\cup\ldots\cup Q_{2g-2}$ of $V\cup_S W$ satisfy the
rectangle condition if for each $i=1,\ldots,2g-2$ and
$j=1,\ldots,2g-2$, $P_i$ and $Q_j$ are tight.
\end{definition}

In Section $3$, we will give the definition of the rectangle condition for Heegaard splittings of $3$-manifolds with non-empty boundary and show that it implies strong irreducibility of the Heegaard splitting (Theorem 3.4). The proof of Theorem 3.4 is a generalization of the proof of the following proposition.

\begin{proposition}
Suppose $P_1\cup\ldots\cup P_{2g-2}$ and $Q_1\cup\ldots\cup Q_{2g-2}$ of $V\cup_S W$ satisfy the
rectangle condition. Then it is strongly irreducible.
\end{proposition}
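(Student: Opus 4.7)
The plan is to argue by contradiction. Suppose $V\cup_S W$ fails to be strongly irreducible, so there exist essential disks $D\subset V$ and $E\subset W$ with $\partial D\cap\partial E=\emptyset$. Consider first the generic case in which neither $D$ nor $E$ is isotopic to a disk of its pants decomposition. Lemma~2.2 then yields waves $\alpha(D)\subset P_{i_0}$ and $\alpha(E)\subset Q_{j_0}$, which are disjoint since they are subarcs of $\partial D$ and $\partial E$. By the definition of a wave, $\alpha(D)$ has both endpoints on a single boundary curve $a_1$ of $P_{i_0}$; an essential arc with both endpoints on one boundary component of a pair of pants cuts the pants into two annuli, each containing one of the remaining boundary curves, so $\alpha(D)$ separates $b_1$ from $c_1$ inside $P_{i_0}$. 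The analogous statement holds for $\alpha(E)$, $a_2$, $b_2$, $c_2$ in $Q_{j_0}$.

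Next I would invoke tightness of $(P_{i_0},Q_{j_0})$ to select the rectangle $R\subset P_{i_0}\cap Q_{j_0}$ with opposite sides on $b_1,c_1$ and the other pair of opposite sides on $b_2,c_2$; this is one of the nine rectangles furnished by Definition~2.3. A core arc of $R$ from its $b_1$-side to its $c_1$-side lies in $P_{i_0}$ and joins $b_1$ to $c_1$, so it must cross $\alpha(D)$; hence $\alpha(D)\cap R\neq\emptyset$. Each component of $\alpha(D)\cap R$ is an arc, and since $\alpha(D)$ meets $\partial P_{i_0}$ only at its two endpoints (which lie on $a_1\neq b_1,c_1$), the endpoints of these component-arcs all lie on the $(b_2\cup c_2)$-sides of $R$.

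The crux of the proof is to extract a spanning component $\gamma\subset\alpha(D)\cap R$ with one endpoint on the $b_2$-side and the other on the $c_2$-side of $R$. If no such $\gamma$ existed, every component of $\alpha(D)\cap R$ would have both endpoints on a single side of $\partial R$, and each such arc together with a subarc of $\partial R$ on that side cobounds a disk in $R$; the union of these disks forms disjoint pockets attached to the $b_2,c_2$-sides, and the complementary region of $R\setminus\alpha(D)$ still contains both the $b_1$-side and the $c_1$-side. A path within that region would then provide an arc in $P_{i_0}$ from $b_1$ to $c_1$ disjoint from $\alpha(D)$, contradicting the separation property established above. The spanning $\gamma$ is then an arc in $Q_{j_0}$ from $b_2$ to $c_2$, and by the separation property of $\alpha(E)$ it must meet $\alpha(E)$, forcing $\alpha(D)\cap\alpha(E)\neq\emptyset$ and contradicting $\partial D\cap\partial E=\emptyset$.

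I expect the main obstacle to be this final combinatorial step inside $R$: one must verify carefully that the constraint on the endpoints of $\alpha(D)\cap R$, together with planarity of the disk $R$, genuinely forces a spanning arc. The edge cases in which $D$ is isotopic to some $D_k$ (so $\partial D=\partial D_k$ is itself a pants curve) or $E$ is isotopic to some $E_\ell$ are handled by the same scheme using a rectangle one of whose sides already lies on $\partial D$ (resp.\ $\partial E$); the resulting spanning arc then directly exhibits a point of $\partial D\cap\partial E$, and the doubly degenerate case follows from the existence of a rectangle having sides on both $\partial D$ and $\partial E$.
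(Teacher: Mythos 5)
Your core geometric argument is sound and in fact makes explicit a step that the paper leaves terse. The paper's proof of Proposition 2.5, after a preparatory isotopy, argues that (whether $D$ is a pants disk or has a wave) $\partial D\cap Q_j$ contains all three essential arc types of $Q_j$ for every $j$, and then observes that a wave of $\partial E$ must meet one of them; the justification that a wave of $\partial D$ forces all three arc types in $Q_j$ is exactly the ``spanning arc inside the rectangle'' analysis that you carry out in detail. Your version targets the single rectangle coupling the two waves rather than establishing all nine arc types first, which is a mild streamlining, and your handling of the degenerate cases via a rectangle with a side on $\partial D$ (resp.\ $\partial E$, resp.\ both) is correct and matches the paper's case $D\simeq D_i$.

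However, there is a genuine gap at the start: you invoke Lemma 2.2 to produce waves for both $D$ and $E$, but Lemma 2.2 is stated under the standing hypothesis that the disk meets $\bigcup D_i$ (resp.\ $\bigcup E_j$) minimally. You must therefore show that a disjoint pair $D,E$ can be isotoped so that $D$ is in minimal position with $\{D_i\}$ \emph{and} $E$ is in minimal position with $\{E_j\}$ \emph{while remaining disjoint}. This is not free: isotoping $\partial D$ across a bigon with some $\partial D_i$ may drag $\partial E$ along, and one needs to know this does not create new intersections of $\partial E$ with $\bigcup\partial Q_j$. The paper handles this explicitly and it is precisely here that the ``no bigon between $\partial P_i$ and $\partial Q_j$'' clause of tightness is used: that clause guarantees that pushing $\partial E$ out of a $(\partial D,\partial P_i)$-bigon $\Delta$ across the $\partial P_i$ side does not increase $|\partial E\cap\bigcup\partial Q_j|$, so the two minimalities can be achieved simultaneously. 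Your proposal never uses that clause of the definition, which is a signal that something is missing; without this preparatory step, the existence of the two disjoint waves on which the entire argument rests is unjustified.
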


\begin{proof}
Suppose $V\cup _S W$ is not strongly irreducible. Then there
exist essential disks $D\subset V$ and $E\subset W$ with
$D\cap E=\emptyset$. Suppose there is a bigon $\Delta$ in some $P_i$ with
$\partial\Delta=\alpha\cup\beta$, where $\alpha$ is a subarc of
$\partial D$ and $\beta$ is a subarc of $\partial P_i$. If any subarc of $\partial E$ is in $\Delta$, we
isotope it  into $S-\Delta$ across $\beta$ before we remove the bigon $\Delta$ by
isotopy of $\alpha$. So we can remove such
bigons maintaining the property that $D\cap E=\emptyset$. Also
note that the number of intersection $|\partial
E\cap (\underset{j=1}{\overset{2g-2}{\bigcup}} \partial Q_j)|$
does not increase after the isotopy. This is because there is no bigon
$\Delta'$ in $\Delta$ with $\partial\Delta'=\gamma\cup\delta$, where $\gamma$
is a subarc of $\partial P_i$ and $\delta$ is a
subarc of $\partial Q_j$ by the definition of
tightness of $P_i$ and $Q_j$. We can also remove a bigon made by a
subarc of $\partial E$ and a subarc of $\partial Q_j$
similarly. So we may assume that $D$ intersects
$\underset{i=1}{\overset{3g-3}{\bigcup}} D_i$ minimally and $E$
intersects $\underset{j=1}{\overset{3g-3}{\bigcup}} E_j$
minimally with $D\cap E=\emptyset$.

Suppose $D$ is isotopic to $D_i$ for some $i$. Let $\partial
Q_j=a_j\cup b_j\cup c_j$ ($j=1,\ldots,2g-2$). Then $\partial
D\cap Q_j$ contains all three types of essential arcs
$\alpha_{j,ab}$, $\alpha_{j,bc}$, $\alpha_{j,ca}$ by the rectangle
condition, where $\alpha_{j,ab}$ is an arc in $Q_j$ connecting
$a_j$ and $b_j$, $\alpha_{j,bc}$ is an arc connecting $b_j$ and
$c_j$ and $\alpha_{j,ca}$ is an arc connecting $c_j$ and $a_j$.
Then $E$ is not isotopic to any $E_j$ since $D\cap E=\emptyset$.
Then $\partial E$ contains a wave by Lemma 2.2 and this
contradicts that $D\cap E=\emptyset$ since a wave intersects at least one of
$\alpha_{j,ab}, \alpha_{j,bc}, \alpha_{j,ca}$ for some $j$.

If $D$ is not isotopic to any $D_i$, $\partial D$ contains a wave
by Lemma 2.2. Then also in this case, $\partial D\cap Q_j$
contains all three types of essential arcs $\alpha_{j,ab}$,
$\alpha_{j,bc}$, $\alpha_{j,ca}$ of $Q_j$ by the rectangle condition.
This gives a contradiction by the same argument as in the above.
\end{proof}

\section{Rectangle condition: bounded case}

Let $V$ a compression body and let $S=\partial_+ V$ with genus $g\ge 2$.
A {\it spanning annulus} in a compression body $V$ is an essential annulus with one boundary component in $\partial_-V$ and the other in $\partial_+ V$.
Suppose a collection of mutually disjoint, non-isotopic essential disks $\{D_i\}$ and spanning annuli $\{A_i\}$, $\{D_1,\ldots,D_k,A_{k+1},\ldots,A_{3g-3}\}$ cuts $V$ into a collection of $2g-2$ pieces $\{B_1,\ldots,B_{2g-2}\}$, where the shape of each $B_i$ is one of $(a)$, $(b)$, $(c)$ in Figure 1. Here the number $k$ is determined by the genera of the components of $\partial_-V$.

\begin{figure}[h]
   \centerline{\includegraphics[width=11cm]{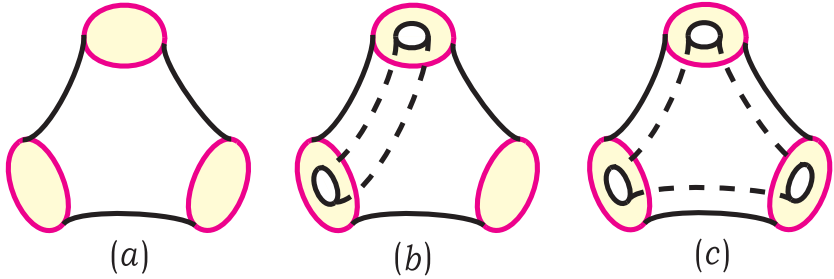}}
    \caption{Three types of $B_i$}
\end{figure}

Let $P_i$ be the pair of pants $B_i\cap S$ $(i=1,\ldots,2g-2)$. Then $S=P_1\cup\ldots\cup P_{2g-2}$ is a pants decomposition of $S$.

Using the collection $\{D_1,\ldots,D_k,A_{k+1},\ldots,A_{3g-3}\}$, we can define a wave for an essential disk in a compression body similarly as we defined a wave in a handlebody.
Note that in $(b)$ of Figure 1, wave is more restrictive. See Figure 2. In $(c)$ of Figure 1, no wave can exist.

\begin{figure}[h]
   \centerline{\includegraphics[width=9cm]{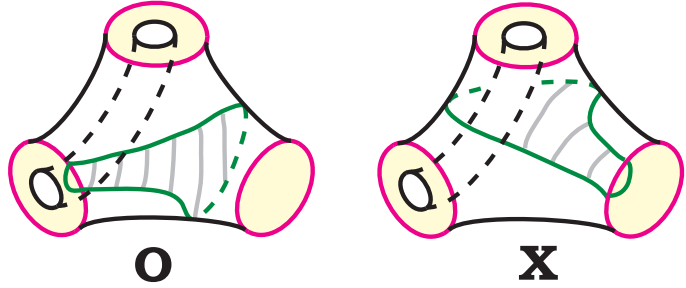}}
    \caption{Wave is more restrictive.}
\end{figure}

Let $P_1$ and $P_2$ be pair of pants, which are subsurfaces of $S$ with $\partial P_i=a_i\cup b_i\cup c_i$ $(i=1,2)$. Assume that $\partial P_1$ and $\partial P_2$ intersect transversely.
We introduce the following definition of `quasi-tight' for two pair of pants which is weaker than being tight.

\begin{remark}
Although there are many cases in the definition, the underlying idea is same.
If there is a wave $\alpha$ in $P_1$, then the rectangles give an obstruction to the existence of wave in $P_2$ that is disjoint from $\alpha$. So we can deduce a contradiction in the proof of Theorem 3.4 for the weak reducing pair of essential disks.
\end{remark}

\begin{definition}
We say that $P_1$ and $P_2$ are quasi-tight if there is no bigon $\Delta$ in $P_1$ and $P_2$ with $\partial\Delta=\alpha\cup\beta$, where $\alpha$ is a subarc of $\partial P_1$ and $\beta$ is a subarc of $\partial P_2$, and one of the following holds.

\begin{itemize}
\item Case $1$.\, Both $P_1$ and $P_2$ correspond to $(a)$ of Figure 1.

For the pair $(a_1,b_1)$ and $(a_2,b_2)$, there is a rectangle $R$ embedded in $P_1$ and $P_2$ such that the interior of $R$ is disjoint from $\partial P_1\cup \partial P_2$ and the four edges of $\partial R$ are subarcs of $a_1,a_2,b_1,b_2$ respectively. Similar rectangles exist for the following combinations.
 $$(a_1,a_2,b_1,b_2)\quad (a_1,b_2,b_1,c_2)\quad (a_1,c_2,b_1,a_2)$$
 $$(b_1,a_2,c_1,b_2)\quad (b_1,b_2,c_1,c_2)\quad (b_1,c_2,c_1,a_2)$$
 $$(c_1,a_2,a_1,b_2)\quad (c_1,b_2,a_1,c_2)\quad (c_1,c_2,a_1,a_2)$$

\item Case $2$.\, One of $P_1$ and $P_2$, say $P_1$, corresponds to $(b)$ of Figure 1 and $P_2$ corresponds to $(a)$ of Figure 1.
 Without loss of generality, assume that $a_1$ and $b_1$ are boundary components of spanning annuli. See Figure 3.

 \begin{figure}[h]
   \centerline{\includegraphics[width=8cm]{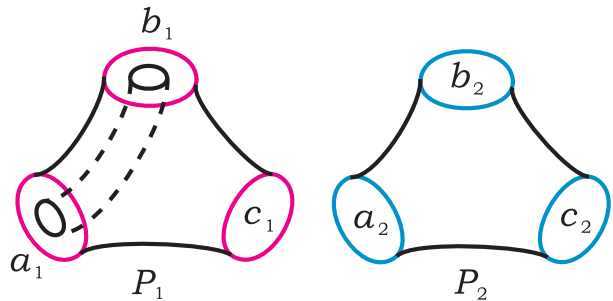}}
    \caption{$P_1$ and $P_2$}
\end{figure}

 \item Subcase $2.1$.\,
 The rectangles as above exist for the following combinations. See Figure 4.
 $$(b_1,a_2,c_1,b_2)\quad (b_1,b_2,c_1,c_2)\quad (b_1,c_2,c_1,a_2)$$
 $$(c_1,a_2,a_1,b_2)\quad (c_1,b_2,a_1,c_2)\quad (c_1,c_2,a_1,a_2)$$

  \begin{figure}[h]
   \centerline{\includegraphics[width=10cm]{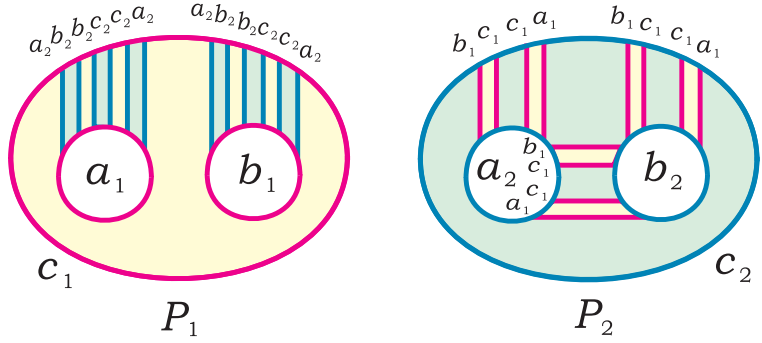}}
    \caption{Subcase $2.1$}
\end{figure}

\item Subcase $2.2$.\,
The rectangles as above exist for the following combinations. See Figure 5. There are other symmetric cases.
 $$(b_1,b_2,c_1,c_2)\quad (b_1,c_2,c_1,a_2)$$
 $$(c_1,b_2,a_1,c_2)\quad (c_1,c_2,a_1,a_2)\quad (c_1,a_2,c_1,b_2)$$

\begin{figure}[h]
\centerline{\includegraphics[width=10cm]{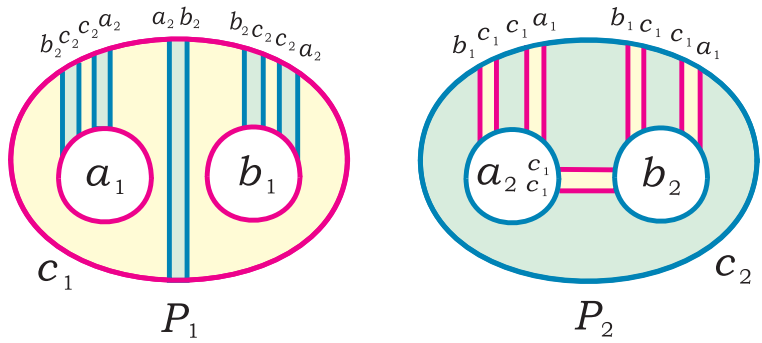}}
\caption{Subcase $2.2$}
\end{figure}

\item Subcase $2.3$.\,
The rectangles as above exist for the following combinations. See Figure 6. There are other symmetric cases.
$$(b_1,c_2,c_1,a_2)\quad (c_1,c_2,a_1,a_2)$$
$$(c_1,a_2,c_1,b_2)\quad (c_1,b_2,c_1,c_2)$$

\begin{figure}[h]
\centerline{\includegraphics[width=10cm]{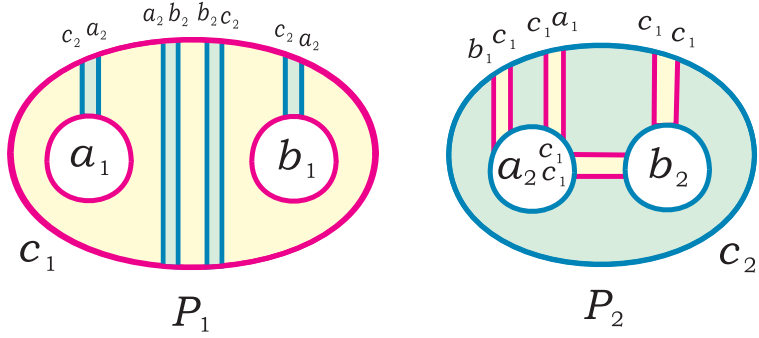}}
\caption{Subcase $2.3$}
\end{figure}

 \item Subcase $2.4$.\,
The rectangles as above exist for the following combinations. See Figure 7.
 $$(c_1,a_2,c_1,b_2)\quad (c_1,b_2,c_1,c_2)\quad (c_1,c_2,c_1,a_2)$$

   \begin{figure}[h]
   \centerline{\includegraphics[width=10cm]{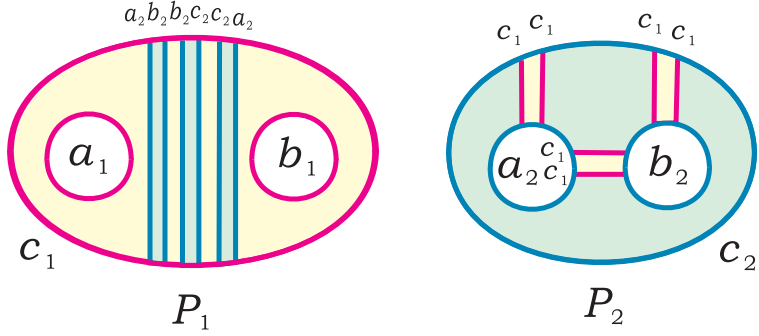}}
    \caption{Subcase $2.4$}
\end{figure}

\vspace{0.2cm}

\item Case $3$.\, Both $P_1$ and $P_2$ correspond to $(b)$ of Figure 1.
Without loss of generality, assume that $a_1$ and $b_1$ are boundary components of spanning annuli.
Also assume that $a_2$ and $b_2$ are boundary components of spanning annuli.

 \item Subcase $3.1$.\,
The rectangles as above exist for the following combinations.
 $$(b_1,b_2,c_1,c_2)\quad (b_1,c_2,c_1,a_2)$$
 $$(c_1,b_2,a_1,c_2)\quad (c_1,c_2,a_1,a_2)$$

\item Subcase $3.2$.\, The rectangles as above exist for the following combinations. There are other symmetric cases.
$$(b_1,c_2,c_1,a_2)\quad (c_1,c_2,a_1,a_2)\quad (c_1,b_2,c_1,c_2)$$

\item Subcase $3.3$.\, The rectangles as above exist for the following combinations. There is another symmetric case.
$$(c_1,b_2,c_1,c_2)\quad (c_1,c_2,c_1,a_2)$$

 \item Subcase $3.4$.\, The rectangle as above exists for the foollowing combination. $$(c_1,c_2,c_1,c_2)$$

\vspace{0.2cm}

\item Case $4$.\, $P_1$ or $P_2$ corresponds to $(c)$ of Figure 1.
In this case there is no requirement on $P_1$ and $P_2$.
\end{itemize}

\end{definition}

Let $V\cup_S W$ be a genus $g\ge 2$ Heegaard splitting of a
$3$-manifold $M$ with non-empty boundary.
Let $\{D_1,\ldots,D_k,A_{k+1},\ldots,A_{3g-3}\}$ be a
collection of essential disks and spanning annuli of $V$ giving a pants
decomposition $P_1\cup\ldots\cup P_{2g-2}$ of $S$ and
$\{E_1,\ldots,E_l,\overline{A}_{l+1},\ldots,\overline{A}_{3g-3}\}$ be a collection of essential disks and spanning annuli
of $W$ giving a pants decomposition $Q_1\cup\ldots\cup
Q_{2g-2}$ of $S$. In this setting, we give a rectangle condition for strong irreducibility.

\begin{definition}
We say that $P_1\cup\ldots\cup P_{2g-2}$ and $Q_1\cup\ldots\cup Q_{2g-2}$ of $V\cup_S W$ satisfy the
rectangle condition if for each $i=1,\ldots,2g-2$ and
$j=1,\ldots,2g-2$, $P_i$ and $Q_j$ are quasi-tight.
\end{definition}

\begin{theorem}
Suppose $P_1\cup\ldots\cup P_{2g-2}$ and $Q_1\cup\ldots\cup Q_{2g-2}$ of $V\cup_S W$ satisfy the
rectangle condition. Then it is strongly irreducible.
\end{theorem}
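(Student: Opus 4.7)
The plan is to follow the outline of Proposition~2.5, with the case analysis expanded to accommodate the three types of pants pieces shown in Figure~1 and the matching subcases of Definition~3.3. Assume for contradiction that $V\cup_S W$ is not strongly irreducible, so there exist essential disks $D\subset V$ and $E\subset W$ with $D\cap E=\emptyset$. First I would carry out the bigon-removal step of Proposition~2.5, now using the collections $\{D_1,\ldots,D_k,A_{k+1},\ldots,A_{3g-3}\}$ in $V$ and $\{E_1,\ldots,E_l,\overline{A}_{l+1},\ldots,\overline{A}_{3g-3}\}$ in $W$. A bigon bounded by a subarc of $\partial D$ and a subarc of some $\partial P_i$ is removed by isotopy after first pushing any trapped subarcs of $\partial E$ across $\partial P_i$; the absence of sub-bigons between $\partial P_i$ and $\partial Q_j$ that is built into Definition~3.3 guarantees that $|\partial E\cap(\bigcup\partial Q_j)|$ does not increase. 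The symmetric procedure works with the roles of $D$ and $E$ swapped, so we may assume $D$ and $E$ meet their respective disk-annulus systems minimally while still $D\cap E=\emptyset$.

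Next, I would dichotomize on $D$. Since $D$ is a disk, it cannot be isotopic to any spanning annulus $A_i$, so either $D$ is isotopic to some $D_i$, or the compression-body version of Lemma~2.2 (obtained by the same outermost-arc argument applied to $\{D_1,\ldots,D_k,A_{k+1},\ldots,A_{3g-3}\}$) yields a wave $\alpha(D)\subset\partial D$ lying in some $P_i$. In the latter case $P_i$ must be of type~(a) or~(b) in Figure~1; it cannot be of type~(c), since no wave exists there, and when $P_i$ is of type~(b) the wave has the restricted form depicted in Figure~2. The same analysis applies to $E$ relative to some $Q_j$.

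The heart of the proof is to use the rectangle condition to derive a contradiction in each combination of alternatives for the pair $(P_i,Q_j)$. If $D$ is isotopic to $D_i$, then quasi-tightness of every $Q_j$ with the two pants pieces sharing $\partial D_i$ forces $\partial D\cap Q_j$ to contain essential arcs of enough connection types in $Q_j$ that any wave of $\partial E$ must cross at least one of them; moreover $E$ cannot be isotopic to any $E_j$ since $D\cap E=\emptyset$. If instead $\partial D$ carries a wave in some $P_i$, then for each $j$ the rectangles enumerated in the appropriate subcase of Definition~3.3 for $(P_i,Q_j)$ are exactly those that, in the spirit of Remark~3.2, obstruct the existence of a wave of $\partial E$ in $Q_j$ disjoint from $\alpha(D)$, and also rule out $E$ being isotopic to an $E_j$ disjoint from $\alpha(D)$. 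Either way the hypothesis $D\cap E=\emptyset$ is contradicted.

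The main obstacle will be the combinatorial verification in Subcases~2.1--2.4 and~3.1--3.4: one must confirm that in every arrangement of pants types and admissible wave positions the explicitly listed rectangles really do force a crossing of $\partial D$ with $\partial E$. The lists in Definition~3.3 are engineered around the restricted shape of waves available in type~(b) pieces, so the check is tedious but essentially routine once the geometry of Figures~2 and~4--7 is kept in mind. I expect this book-keeping, rather than any new conceptual step, to be the bulk of the argument.
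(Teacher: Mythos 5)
Your proposal follows essentially the same route as the paper's proof: assume a weak reducing pair $D\subset V$, $E\subset W$, remove bigons using the no-bigon clause of quasi-tightness so that the count $|\partial E\cap(\bigcup\partial Q_j)|$ does not increase, then split into the cases that $D$ is isotopic to some $D_i$ or that $\partial D$ carries a wave (via the compression-body analogue of Lemma 2.2), and in either case invoke the listed rectangles to force $\partial D\cap\partial E\neq\emptyset$. In fact the paper's own proof is even terser than your write-up, deferring the case-by-case rectangle check to "by the rectangle condition we can check," which is exactly the bookkeeping you identify as the bulk of the work.
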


\begin{proof}
The idea is same with the proof of Proposition 2.5. Suppose $V\cup _S W$ is not strongly irreducible. Then there exist essential disks $D\subset V$ and $E\subset W$ with
$D\cap E=\emptyset$. First we remove bigons made by a subarc of $\partial D$ and a subarc of $\partial P_i$ by isotopy maintaining the property that $D\cap E=\emptyset$. The isotopy does not increase $|\partial
E\cap (\underset{j=1}{\overset{2g-2}{\bigcup}} \partial Q_j)|$. Similarly we remove bigons made by a subarc of $\partial E$ and a subarc of $\partial Q_j$. So we can assume that $D$ intersect the collection $\{D_1,\ldots,D_k,A_{k+1},\ldots,A_{3g-3}\}$ minimally and $E$ intersect the collection $\{E_1,\ldots,E_l,\overline{A}_{l+1},\ldots,\overline{A}_{3g-3}\}$ minimally with $D\cap E=\emptyset$.

Suppose $D$ is isotopic to $D_i$ for some $1\le i\le k$. Then by the rectangle condition we can check that there cannot exist $E$ in $W$ with $D\cap E=\emptyset$, a contradiction.

If $D$ is not isotopic to any $D_i$ $(1\le i\le k)$, $\partial D$ contains a wave by arguments as in the proof of Lemma 2.2. Then by the rectangle condition we can check that there cannot exist $E$ in $W$ with $D\cap E=\emptyset$, a contradiction.
\end{proof}

\section{Generalized Heegaard splitting and thin position of knots and links}

A {\it generalized Heegaard splitting} of a $3$-manifold $M$ is a decomposition
$$M=(V_1\cup_{S_1}W_1)\cup_{F_1}\ldots\cup_{F_{n-1}}(V_n\cup_{S_n}W_n)$$
where the collection of surfaces $\{F_i\}$ cut $M$ into submanifolds $\{M_i\}$ and $V_i\cup_{S_i}W_i$ is a Heegaard splitting of $M_i$ and $\partial M=\partial_- V_1\cup\partial_-W_n$.

A generalized Heegaard splitting is {\it strongly irreducible} if each Heegaard splitting $V_i\cup_{S_i} W_i$ is strongly irreducible. By \cite{Scharlemann-Thompson} or by Lemma 4.6 of \cite{Bachman}, if a generalized Heegaard splitting is strongly irreducible, then each surface  $F_i$ is incompressible in $M$ and it is called a {\it thin surface}.

We say that a generalized Heegaard splitting satisfies the {\it rectangle condition} if each Heegaard surface $S_i$ of $V_i\cup_{S_i} W_i$ admits a pants decomposition satisfying the rectangle condition. Then by definition and Theorem 3.4, a generalized Heegaard splitting satisfying the rectangle condition is strongly irreducible.

\begin{remark}
If we allow $2$-sphere components for $F_i$ and also allow $2$-sphere components for minus boundary components in the definition of a compression body, then a $2$-sphere component $F_i$ in a strongly irreducible generalized Heegaard splitting is essential \cite{Scharlemann-Thompson}.
\end{remark}

Let $h:S^3\rightarrow [0,1]$ be a standard height function. For a link $K$ in $S^3$, assume that  $h|_K$ is a Morse function. Let $0<c_1<\ldots<c_n<1$ be critical values of $h|_K$. Choose regular values
$0<r_1<\ldots<r_{n-1}<1$ of $h|_K$ such that $c_i<r_i<c_{i+1}$ $(1\le i\le n-1)$. The {\it width of an embedding of $K$} is $\sum_i |K\cap h^{-1}(r_i)|$. The {\it width} of $K$, denoted by $w(K)$, is the minimum taken among all embeddings of $K$. A link $K$ is in {\it thin position} if it realizes $w(K)$.

For a regular value $r$ of $h|_K$, let $S_r$ be the level sphere $h^{-1}(r)$. Let $P_r$ be the meridional planar level surface $cl(S_r-N(K))$.

\begin{definition}
An {\it upper disk} for a meridional planar level surface $P_r$ is a disk $D$ transverse to $P_r$ such that
\begin{itemize}
\item $D\subset cl(S^3-N(K))$, $\partial D=\alpha\cup\beta$, $\partial\alpha=\partial\beta$.
\item $\beta$ is an arc embedded in $\partial N(K)$, parallel to a subarc of $K$.
\item $\alpha$ is an arc properly embedded in $P_r$, and a small product neighborhood of $\alpha$ in $D$ lies above $P_r$.
\end{itemize}
\end{definition}

A {\it strict upper disk} for $P_r$ is an upper disk whose interior is disjoint from $P_r$.
A {\it lower disk}  and {\it strict lower disk} can be defined similarly.

\begin{figure}[h]
   \centerline{\includegraphics[width=7cm]{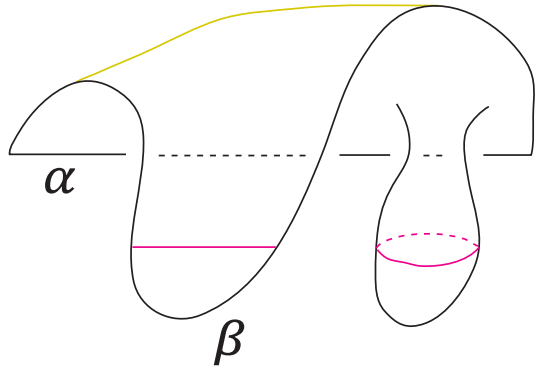}}
    \caption{An upper disk}
\end{figure}

\begin{remark}
Note that in the upper disk $D$, $\beta$ and $int(D)$ may intersect $P_r$. See Figure 8.
\end{remark}

\begin{definition}
A thin level for $K$ is a level $2$-sphere $S$ such that the following hold.
\begin{itemize}
\item $S=h^{-1}(r_i)$ for some regular value $r_i$.
\item $r_i$ lies between adjacent critical values $c_i$ and $c_{i+1}$ of $h$, where $c_i$ is a local maximum of $K$ lying below $r_i$ and $c_{i+1}$ is a local minimum of $K$ lying above $r_i$.
\end{itemize}
A thick level is a level $2$-sphere $S$ such that the following hold.
\begin{itemize}
\item $S=h^{-1}(r_i)$ for some regular value $r_i$.
\item $r_i$ lies between adjacent critical values $c_i$ and $c_{i+1}$ of $h$, where $c_i$ is a local minimum of $K$ lying below $r_i$ and $c_{i+1}$ is a local maximum of $K$ lying above $r_i$
\end{itemize}
\end{definition}

Let $S_1=h^{-1}(r_j)$ be a thin level and $S_2=h^{-1}(r_i)$ $(r_i<r_j)$ be an adjacent thick level lying below $S_1$. Consider the region $\mathcal{R}$ between $S_1$ and $S_2$. $\mathcal{R}\cap K$ consist of arcs $\{\beta_i\}$ with endpoints in $S_2$ and vertical arcs $\{\delta_i\}$. Each $\beta_i$ has exactly one local maximum. So there exists a collection of disjoint strict upper disks $\{D_i\}$ such that $\partial D_i=\beta_i\cup\alpha_i$ with $\alpha_i\subset S_2$.

\begin{figure}[h]
   \centerline{\includegraphics[width=9cm]{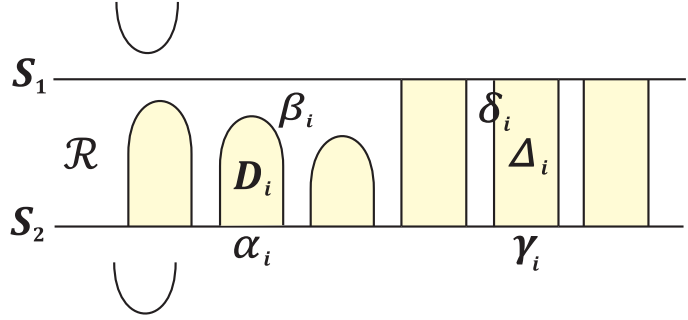}}
    \caption{A region between thin level and thick level}
\end{figure}

The number of vertical arcs is even, hence two vertical arcs can be grouped as a pair. There exists a collection of disjoint vertical rectangles $\{\Delta_i\}$ such that two vertical sides of $\Delta_i$ are a pair of $\delta_i$'s and top and bottom sides of $\Delta_i$ are in $S_1$ and $S_2$, respectively. Let $\gamma_i$ be the bottom side of $\Delta_i$. $\{\Delta_i\}$ can be taken to be disjoint from $\{D_i\}$. See Figure 9. In a region between a thick level and adjacent thin level below it, collections of disjoint strict lower disks and vertical rectangles can be taken similarly.

\section{$2$-fold branched covering}

Now we give a connection between Morse position of a link in $S^3$ and a generalized Heegaard splitting of a $2$-fold branched covering induced from the Morse position. The following construction is referred from \cite{Howards-Schultens}.

Let $\mathcal{R}$ be a region between adjacent thin level $S_1$ and thick level $S_2$ as in Section $4$.
Let $D_i$, $\alpha_i$, $\beta_i$, $\Delta_i$, $\gamma_i$, $\delta_i$ denote the same objects as in Section $4$. Cut $\mathcal{R}$ along the collections of disks $\{D_i\}$ and $\{\Delta_i\}$. Let $\mathcal{R'}$ be the resulting manifold. Take a copy of $\mathcal{R'}$, rotate it $180^{\circ}$ and attach the two copies along the corresponding $D_i$'s and $\Delta_i$'s. This is a $2$-fold covering of $\mathcal{R}$ branched along the collections $\{\beta_i\}$ and $\{\delta_i\}$. See Figure 10.

\begin{figure}[h]
   \centerline{\includegraphics[width=12cm]{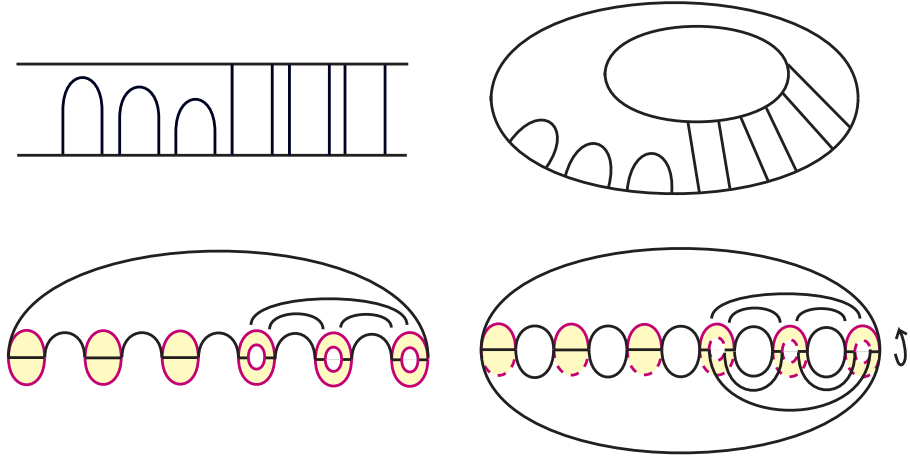}}
    \caption{A compression body obtained by $2$-fold branched covering}
\end{figure}

The resulting manifold is a compression body $V$. Let $f:V\rightarrow \mathcal{R}$ be the branched covering map. We can see that $\partial_+V=f^{-1}(S_2)$ and $\partial_-V=f^{-1}(S_1)$. $f^{-1}(D_i)$ is an essential disk in $V$ with boundary $f^{-1}(\alpha_i)$. $f^{-1}(\Delta_i)$ is a spanning annulus in $V$ with the boundary component in $\partial_+V$ being equal to $f^{-1}(\gamma_i)$.

To the collection of essential disks $\{f^{-1}(D_i)\}$ and spanning annuli $\{f^{-1}(\Delta_i)\}$, we add more essential disks and spanning annuli to give a pants decomposition of $\partial_+V$ as in $(b)$ of Figure 11. The image by $f$ of the collections of curves giving the pants decomposition is depicted in $(a)$ of Figure 11.

\begin{figure}[h]
   \centerline{\includegraphics[width=8cm]{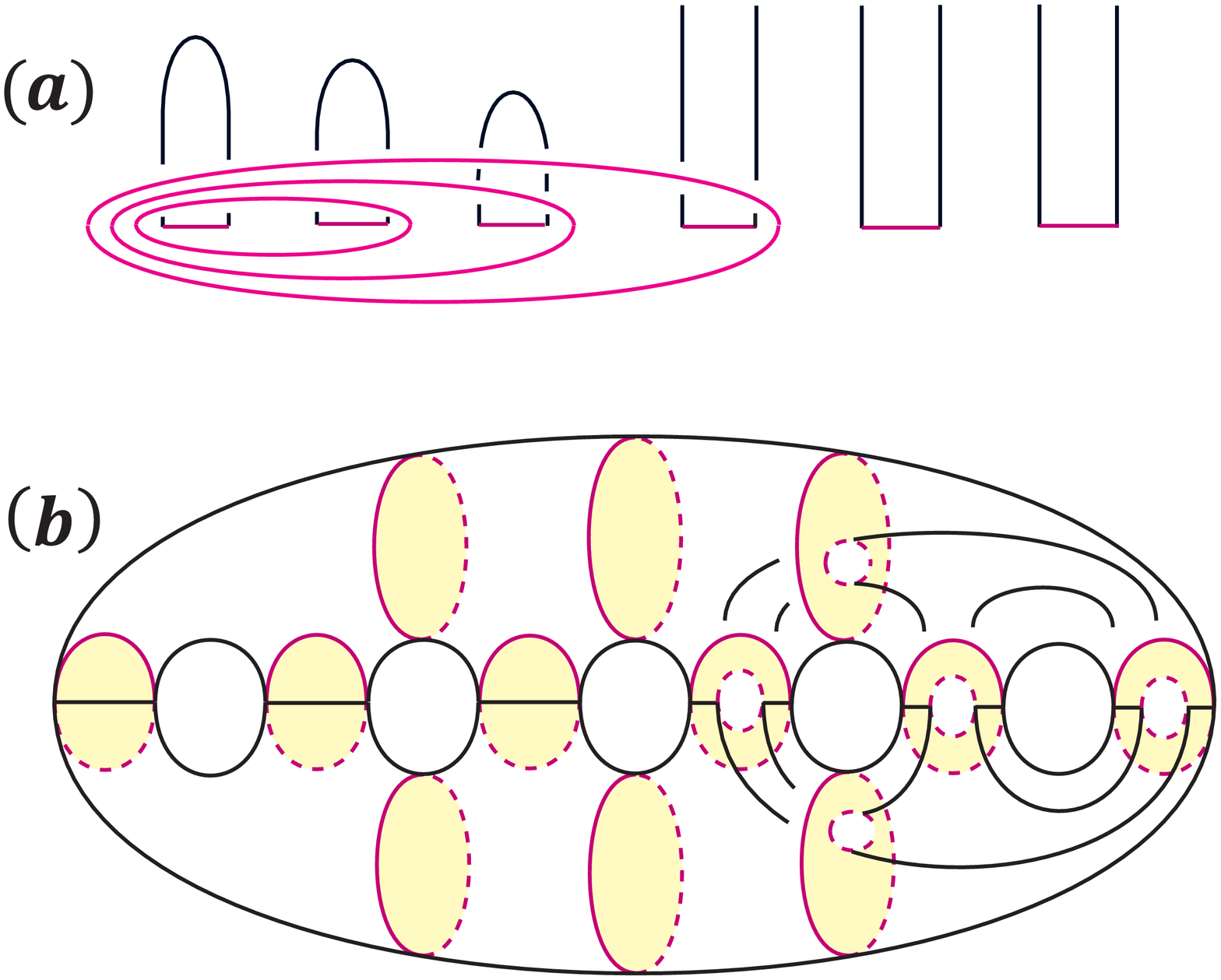}}
    \caption{Curves giving a pants decomposition and corresponding images by $f$}
\end{figure}

Similarly for other regions between adjacent thick and thin levels, we can construct a compression body by $2$-fold branched covering. Hence we get a generalized Heegaard splitting of a $2$-fold branched covering.
We can check the strong irreducibility of the generalized Heegaard splitting using the rectangle condition. We only need to check the intersection of collections of arcs and loops (as in $(a)$ of Figure 11) coming from two adjacent regions of the Morse position. In this case, we say that the link satisfies the {\it rectangle condition} if its induced generalized Heegaard splitting of the $2$-fold branched cover satisfies the rectangle condition.

\subsection{Examples}

If a link in bridge position is sufficiently complicated, it possibly satisfies the rectangle condition. Figure 12 shows an example of a $4$-bridge knot in bridge position satisfying the rectangle condition.
 Let us check it. We have the corresponding genus three Heegaard splitting of $2$-fold branched cover as in Figure 13. Let the upper handlebody be $V$ and the lower handlebody be $W$.

 \begin{figure}[h]
   \centerline{\includegraphics[width=7.5cm]{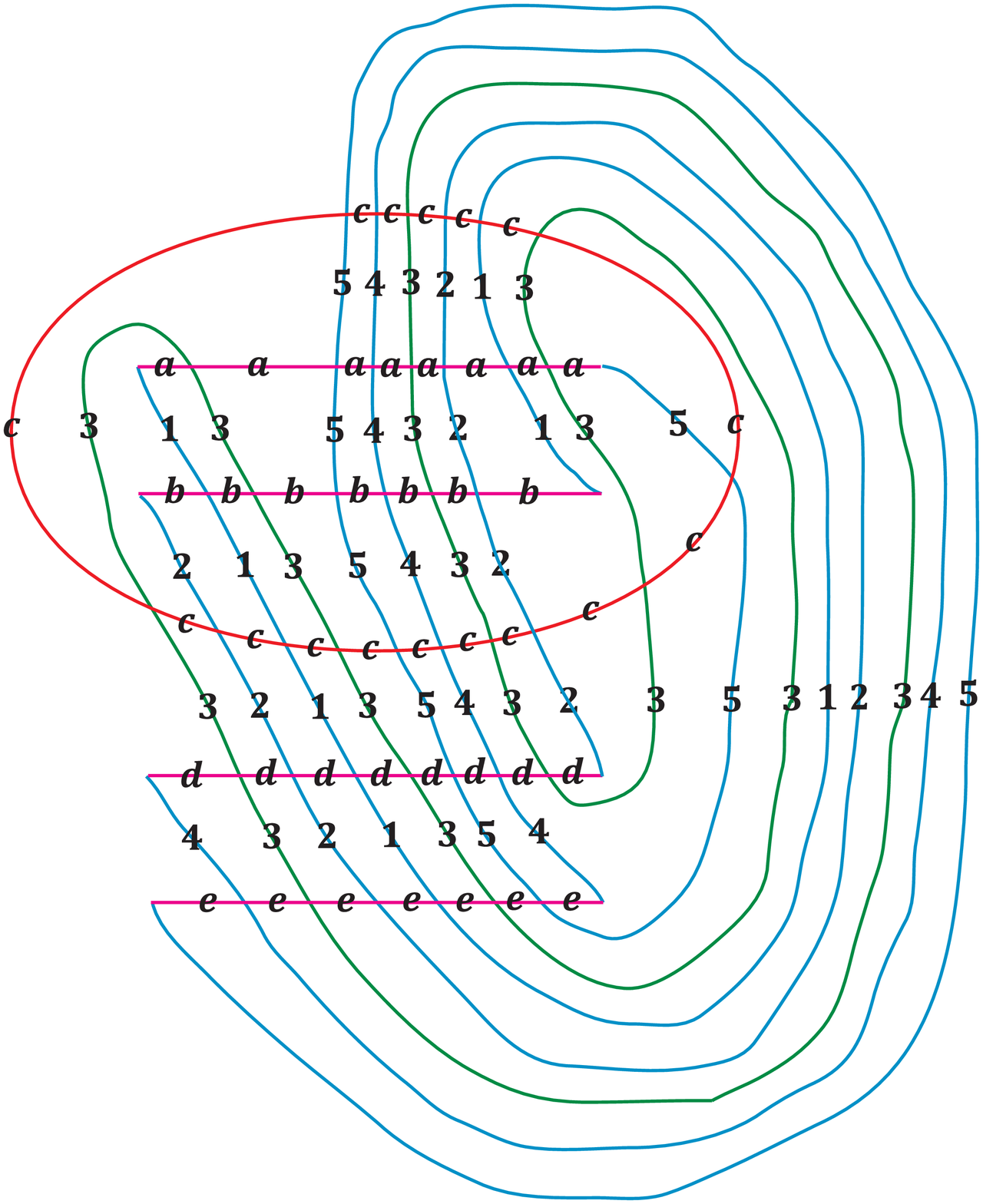}}
    \caption{An example of a $4$-bridge knot satisfying the rectangle condition}
\end{figure}

\begin{figure}[h]
   \centerline{\includegraphics[width=6cm]{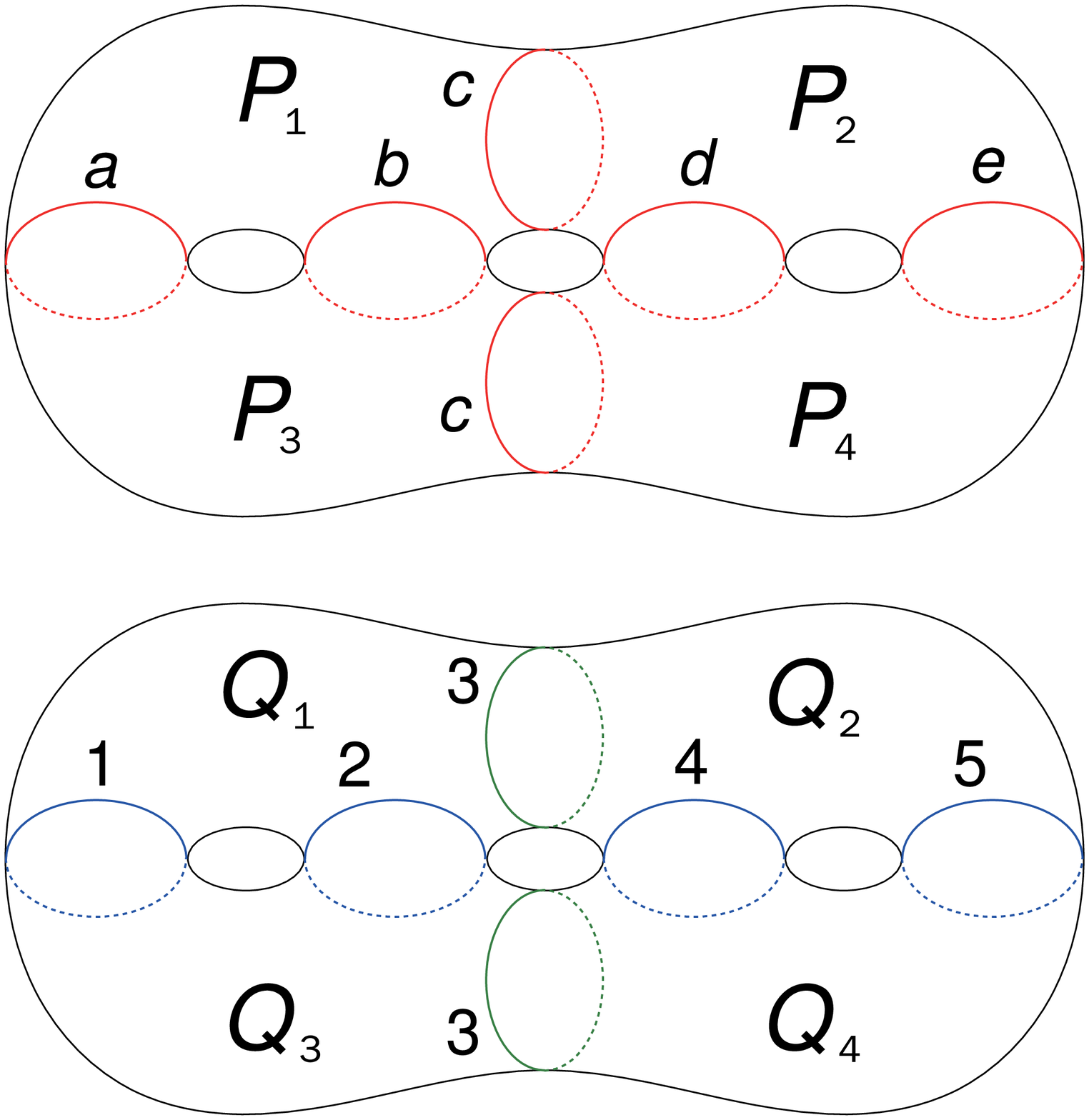}}
    \caption{}
\end{figure}

 Let $f$ be the branched covering map. Then the arcs $1,2,4,5$ of Figure 12 correspond to the image by $f$ of the loops of the same labels of Figure 13 and the loop $3$ of Figure 12 corresponds to the image by $f$ of the loop of label $3$ of Figure 13.

 To verify the rectangle condition of the splitting, let us see Figure 12. We can find the rectangle $R=(a,1,b,2)$. The preimage of $R$ for $f$ consists of two rectangles, one is in $P_1$ and the other is in $P_3$ in $\partial V$, and in a viewpoint of $\partial W$ one is in $Q_1$ and the other is in $Q_3$. Similarly, we can check for the rectangles $(a,2,b,3)$, $(a,3,b,1)$, $(b,1,c,2)$, $(b,2,c,3)$, $(b,3,c,1)$, $(c,1,a,2)$, $(c,2,a,3)$, $(c,3,a,1)$. This completes the quasi-tightness for the pairs $(P_1,Q_1)$, $(P_1,Q_3)$, $(P_3,Q_1)$, $(P_3,Q_3)$. We can prove for the pairs $\{(P_1,Q_2),(P_1,Q_4),(P_3,Q_2),(P_3,Q_4)\}$, $\{(P_2,Q_1),(P_2,Q_3),(P_4,Q_1),(P_4,Q_3)\}$, $\{(P_2,Q_2),(P_2,Q_4),(P_4,Q_2),(P_4,Q_4)\}$ similarly. This completes the proof of the rectangle condition of the $2$-fold branched cover.

\vspace{0.1cm}

Figure 14 shows an example with fewer intersections satisfying the quasi-tightness.
The right of Figure 14 means that the left diagram is on the highest thick level sphere of the knot. (For any other thick level, there exists a corresponding intersection diagram of arcs and loops like this.) Then we get the corresponding genus three Heegaard splitting of $2$-fold branched cover for the upper part of the highest thin level sphere of the knot in $S^3$ as in Figure 15. Let the upper handlebody be $V$ and the lower compression body be $W$.

\begin{figure}[h]
   \centerline{\includegraphics[width=12cm]{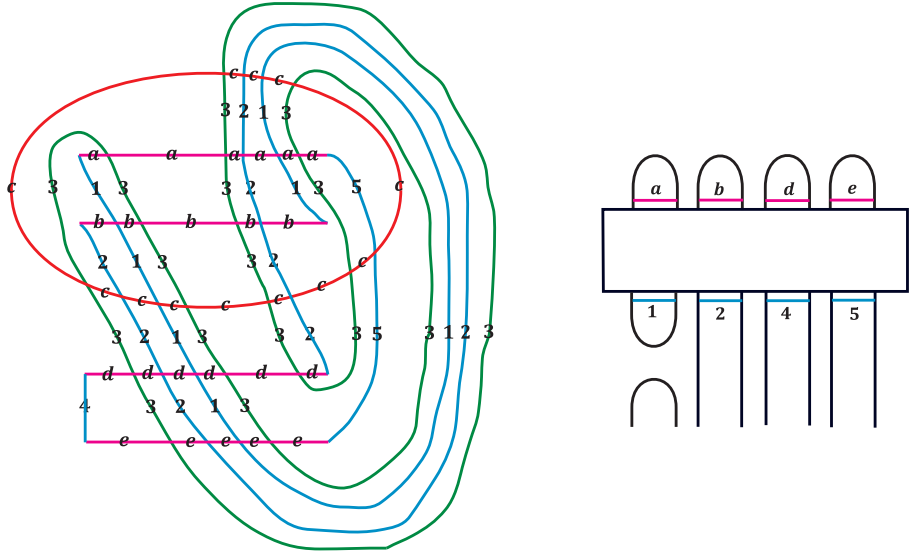}}
    \caption{An example satisfying the quasi-tightness}
\end{figure}

\begin{figure}[h]
   \centerline{\includegraphics[width=6cm]{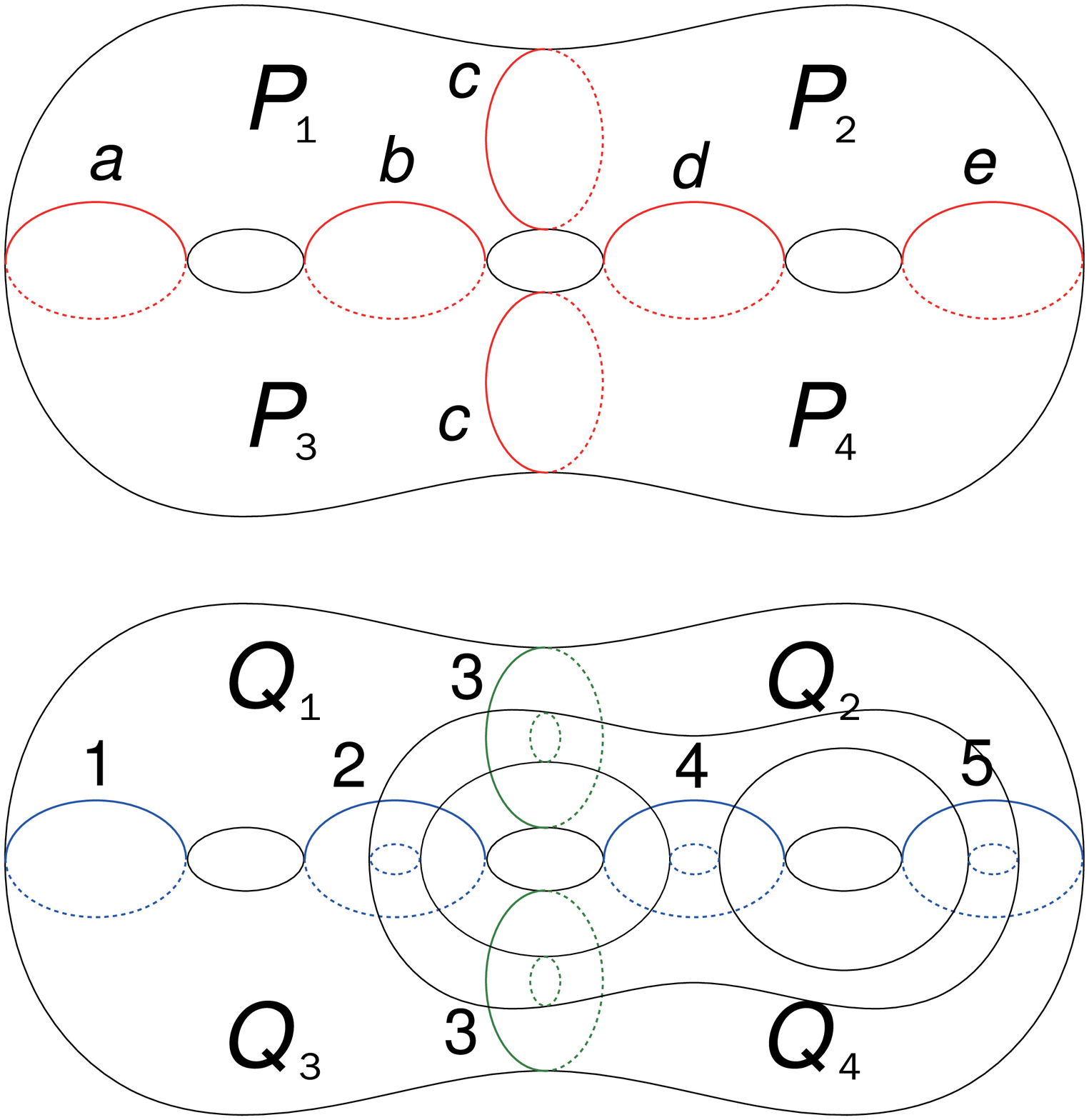}}
   \caption{}
\end{figure}

To verify the quasi-tightness of the pairs $(P_1,Q_1)$, $(P_1,Q_3)$, $(P_3,Q_1)$, $(P_3,Q_3)$, we check the existence of rectangles $(a,1,b,2)$, $(a,1,b,3)$, $(b,1,c,2)$, $(b,1,c,3)$, $(c,1,a,2)$, $(c,1,a,3)$ in Figure 14. Similarly, we can verify the quasi-tightness of the pairs $(P_2,Q_1)$, $(P_2,Q_3)$, $(P_4,Q_1)$, $(P_4,Q_3)$. Since each of both $Q_2$ and $Q_4$ corresponds to $(c)$ of Figure 1 in $W$, we don't need to check the quasi-tightness of the pairs $\{(P_1,Q_2),(P_1,Q_4),(P_3,Q_2),(P_3,Q_4)\}$, $\{(P_2,Q_2),(P_2,Q_4),(P_4,Q_2),(P_4,Q_4)\}$. This completes the proof of the rectangle condition of the $2$-fold branched cover.

We can make examples of higher bridge links satisfying the rectangle condition by similar construction.
Now the result is summarized as follows.

\begin{proposition}
The rectangle condition for the generalized Heegaard splitting of a $2$-fold branched covering can be checked by the arcs and loops as in $(a)$ of Figure 11, i.e. the rectangles formed by these arcs and loops are lifted to rectangles for the quasi-tightness of the Heegaard splitting of $2$-fold branched cover.
If it satisfies the rectangle condition, then any thin surface in the $2$-fold branched cover is incompressible.
\end{proposition}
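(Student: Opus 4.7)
The statement has two parts: a lifting assertion for rectangles, and an incompressibility assertion for thin surfaces. My plan is to handle the lifting first and then deduce the incompressibility as a short application of the machinery already set up.

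For the lifting, I would use that the branched covering map $f$ restricts on each Heegaard surface $S_k=f^{-1}(S_2)$ to a two-fold cover $S_k\to S_2$ of the thick level sphere $S_2$, branched only at the finitely many points $S_2\cap K$. By the construction of the generalized Heegaard splitting recalled before the statement, the arcs and loops drawn in Figure~11(a) are exactly the $f$-images of the pants decomposition curves for $S_k$ coming from the region above $S_2$ (giving the curves on the $V_k$ side) and from the region below (giving the curves on the $W_k$ side). A rectangle $R\subset S_2$ formed by these arcs and loops has its interior disjoint from the arcs, the loops, and consequently from the branch set $S_2\cap K$; hence $f^{-1}(R)$ is an unbranched two-fold cover of the disk $R$, so it splits as two disjoint disks in $S_k$. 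Each of these disks is bounded by four arcs lying in the pants decomposition curves of $S_k$ in the correct alternating pattern, so each is a rectangle of the kind demanded by Definition~3.3 inside some pair $(P_i,Q_j)$ with $P_i\subset\partial_+V_k$ and $Q_j\subset\partial_+W_k$.

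The incompressibility assertion then follows immediately. If the rectangle condition holds, then Theorem~3.4 yields that every Heegaard splitting $V_k\cup_{S_k}W_k$ is strongly irreducible, which is precisely the definition of strong irreducibility for the generalized Heegaard splitting from Section~4; the Scharlemann--Thompson theorem (or Bachman's Lemma~4.6) then gives that every thin surface in the $2$-fold branched cover is incompressible. I expect the main technical obstacle to be the case-by-case bookkeeping in the lifting step: one must verify, against the three shapes $(a)$, $(b)$, $(c)$ of $B_i$ in Figure~1 and the corresponding subcases of Definition~3.3, that the two lifts of a downstairs rectangle always realise a rectangle of the precise combinatorial type required, and that the corners of the downstairs rectangles may be chosen in the interiors of the arcs and loops rather than at branch points or at crossings of the $V$- and $W$-side curve systems.
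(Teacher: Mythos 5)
Your proposal is correct and follows essentially the same route as the paper: for the incompressibility assertion you argue exactly as the paper does (rectangle condition $\Rightarrow$ each $V_k\cup_{S_k}W_k$ strongly irreducible via Theorem 3.4, hence the generalized splitting is strongly irreducible, then Scharlemann--Thompson gives incompressibility of thin surfaces). The paper's own proof addresses only that second assertion; your unbranched-lift argument for the rectangles (interior of a downstairs rectangle misses the arcs and loops, hence the branch points, so its preimage is two disjoint rectangles upstairs) simply makes explicit what the paper leaves to the construction and its worked examples.
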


\begin{proof}
If it satisfies the rectangle condition, the generalized Heegaard splitting is strongly irreducible.
Then by \cite{Scharlemann-Thompson}, any thin surface in the generalized Heegaard splitting is incompressible.
\end{proof}

While a thin level of lowest width is incompressible \cite{Wu}, in general a thin level surface can possibly be compressible in the link complement \cite{Tomova}. We have the following.

\begin{theorem}
The rectangle condition implies that every thin meridional planar level surface is incompressible in the link complement.
\end{theorem}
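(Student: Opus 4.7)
The plan is to argue by contradiction, lifting a hypothetical compressing disk to the $2$-fold branched cover and invoking Proposition 5.1 to obtain a contradiction with incompressibility of thin surfaces. Suppose some thin meridional planar level surface $P_{r_j}=cl(S_{r_j}-N(K))$ admits a compressing disk $D\subset cl(S^3-N(K))$. Let $f:\tilde S^3\to S^3$ denote the $2$-fold branched covering over $K$; by the construction of Section 5, the closed surface $F_j=f^{-1}(S_{r_j})$ is exactly the thin surface of the induced generalized Heegaard splitting corresponding to $S_{r_j}$.

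First I would lift $D$. Since $D$ is disjoint from the branch locus $K$, the restriction of $f$ over $D$ is an unbranched $2$-fold cover of a simply connected space, hence trivial. Thus $f^{-1}(D)=D_1\sqcup D_2$ is a pair of disjoint embedded disks in $\tilde S^3$, each meeting $F_j$ precisely in its boundary. It then suffices to show that $\partial D_1$ and $\partial D_2$ are essential in $F_j$; granting this, each $D_i$ is a genuine compressing disk for $F_j$ in $\tilde S^3$, contradicting Proposition 5.1.

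For essentiality, note that $\partial D$ separates $S_{r_j}$ into two disks containing $k$ and $n-k$ points of $S_{r_j}\cap K$ respectively, where $n=|S_{r_j}\cap K|$. Since $\partial D$ bounds $D$ in $S^3-K$, it is null-homotopic there, so its image under the composition $\pi_1(S^2-K)\to\pi_1(S^3-K)\to\mathbb{Z}/2$ that defines the branched cover is trivial; but this image equals $k\pmod 2$, forcing $k$ to be even. Essentiality of $\partial D$ in $P_{r_j}$ further excludes $k=0$ and $k=n$, giving $2\le k\le n-2$. By Riemann--Hurwitz, the preimages in $F_j$ of the two disks bounded by $\partial D$ in $S_{r_j}$ are connected subsurfaces of genera $(k-2)/2$ and $(n-k-2)/2$, each with exactly two boundary components (the two lifts of $\partial D$). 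Since each lift is only one of two boundary circles of such a subsurface, it cannot bound a disk on either side in $F_j$, so both lifts are essential.

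The main obstacle is this final essentiality check: it requires the parity constraint coming from $D\subset S^3-K$, which forces both sides of $\partial D$ in $S_{r_j}$ to contain at least two branch points and hence prevents a disk region from appearing on either side of a lift in $F_j$. The remainder is routine covering-space theory together with a direct appeal to Proposition 5.1.
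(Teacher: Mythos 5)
Your argument is correct and follows essentially the same route as the paper: lift the hypothetical compressing disk to two disjoint disks in the $2$-fold branched cover, check that their boundaries are essential in the thin surface $f^{-1}(S_{r_j})$ because each side of $\partial D$ meets $K$ in an even, nonzero number of points, and contradict Proposition 5.1. Your write-up is in fact somewhat more detailed than the paper's (explicit $\mathbb{Z}/2$ parity argument and Riemann--Hurwitz count), but it is the same proof.
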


\begin{proof}
Let $S$ be a thin level surface for the link $K$. Suppose $P=cl(S-N(K))$ is compressible and let $D$ be a compressing disk for $P$. $\partial D$ decomposes $S$ into two disks $\Delta_1$ and $\Delta_2$. Since $D$ is a compressing disk, both $\Delta_1$ and $\Delta_2$ intersects $K$, in even number of points. Let $f:M\rightarrow S^3$ be the branched covering map. Then $f^{-1}(D)$ is a disjoint union of two disks $E_1\dot{\cup}\, E_2$, where $E_i\cap f^{-1}(S)=\partial E_i$ $(i=1,2)$ and $\partial E_i$ is essential in $f^{-1}(S)$. ($\partial E_1\cup \partial E_2$ cuts $f^{-1}(S)$ into two non-disk components $f^{-1}(\Delta_1)$ and $f^{-1}(\Delta_2)$.) This means that the thin surface $f^{-1}(S)$ is compressible in $M$, which contradicts Proposition 5.1.
\end{proof}

\section{Application to additivity of knot width}

Let $K=K_1\# K_2$ be a composite knot. Put a thin position of $K_1$ vertically over a thin position of $K_2$ and do a connected sum operation by vertical arcs. This gives a presentation of $K$ from which we can see that $w(K)\le w(K_1)+w(K_2)-2$. There are cases where equality holds, that is, width is additive under connected sum.
Rieck and Sedgwick showed that it is true for small knots \cite{Rieck-Sedgwick}. However, recently Blair and Tomova showed that width is not additive in general \cite{Blair-Tomova}.

In \cite{JungsooKim}, the first author gave some condition for knots satisfying the additivity of width via $2$-fold branched covering as discussed in Section $5$.

\begin{theorem}[\cite{JungsooKim}]
If a thin position of a knot $K$ induces a strongly irreducible generalized Heegaard splitting of the $2$-fold branched cover of $(S^3,K)$, then the thin position of $K$ is the connect sum of thin position of prime summands of $K$ vertically. Therefore $K$ satisfies the additivity of knot width.
\end{theorem}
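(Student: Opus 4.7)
\smallskip

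\noindent\emph{Proof proposal.} The plan is to induct on the number $n$ of prime summands in the factorization $K=K_1\#\cdots\#K_n$; the case $n=1$ is vacuous, so for $n\ge 2$ the core task is to locate a connect sum decomposing sphere of $K$ at a thin level of the given thin position. Since each prime factor $K_i$ is nontrivial, its double branched cover $\Sigma(K_i)$ is not $S^3$, hence $M=\Sigma(K)=\Sigma(K_1)\#\cdots\#\Sigma(K_n)$ is reducible and any decomposing sphere for $K=K'\#K''$ lifts to an essential separating $2$-sphere $\widetilde{F}\subset M$.

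The main technical step is to show that, in a strongly irreducible generalized Heegaard splitting of $M$, every essential $2$-sphere is isotopic to a $2$-sphere component of one of the thin surfaces $F_j$. First isotope $\widetilde{F}$ to meet each Heegaard surface $S_j$ transversely; strong irreducibility of the piece $V_j\cup_{S_j}W_j$ should then allow one to remove every circle of $\widetilde{F}\cap S_j$ by a standard outermost-disk exchange, since any remaining circle bounding disks on both sides of $S_j$ would yield a weak reducing pair. Once $\widetilde{F}$ is disjoint from $\bigcup_j S_j$, it lies in a single compression body, which is irreducible except for $2$-sphere components of $\partial_-$, so $\widetilde{F}$ must be parallel to such a component, as permitted by Remark 5.1. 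This $2$-sphere thin surface $F_j$ is the double branched cover of the planar level surface $cl(S_j-N(K))$ over $S_j\cap K$; since the branched double cover of $S^2$ over $k$ points has genus $(k-2)/2$, the sphere condition forces $|S_j\cap K|=2$, and hence $S_j$ is itself a decomposing sphere for a nontrivial connect sum $K=K'\#K''$.

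The portions of $K$ above and below $S_j$, each closed up by a trivial arc through the two branch points, present $K'$ and $K''$; both must themselves be in thin position, for otherwise restacking a thinner presentation over the other would give a strictly thinner presentation of $K$, contradicting the thinness hypothesis. Moreover, the generalized Heegaard splittings of $\Sigma(K')$ and $\Sigma(K'')$ obtained by cutting $M$ along the $2$-sphere $F_j$ inherit strong irreducibility, so the inductive hypothesis applies and iterating yields the asserted vertical connect-sum decomposition of the full thin position. Width additivity then follows: counting intersections of $K$ with regular levels in this vertical stack gives exactly $w(K)=\sum_i w(K_i)-2(n-1)$, matching the known upper bound recorded at the start of this section.

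The principal obstacle is the middle step: forcing $\widetilde{F}$ to become disjoint from every Heegaard surface $S_j$ simultaneously without losing control, since removing intersections with one $S_j$ may a priori introduce new intersections with an adjacent thin surface $F_{j\pm 1}$ or with another Heegaard surface. The resolution will require a careful induction on an appropriate complexity, using strong irreducibility of each piece in turn while respecting the incompressibility of the thin surfaces bounding it.
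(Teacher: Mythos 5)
First, a point of comparison: the paper does not prove this statement at all --- Theorem 6.1 is imported from \cite{JungsooKim} and used as a black box --- so there is no in-paper argument to measure you against, and I am judging your sketch on its own terms. Your global strategy is the natural one and is surely close in spirit to the cited proof: since no prime summand has $S^3$ as its $2$-fold branched cover, $M=\Sigma(K)$ is a nontrivial connected sum, hence reducible; one then wants to force an essential sphere to a $2$-sphere thin level, which is the lift of a thin level sphere meeting $K$ in exactly two points and hence (by essentiality and the Smith conjecture) a decomposing sphere; then split, show both halves are again thin positions inducing strongly irreducible splittings, induct, and count width. The restacking argument for thinness of the halves and the final count $w(K)=\sum_i w(K_i)-2(n-1)$ are fine.

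The genuine gap is exactly the step you flag. Circles of $\widetilde{F}\cap S_j$ bound disks only on the sphere $\widetilde{F}$, not in the compression bodies $V_j$, $W_j$: an innermost or outermost piece of $\widetilde{F}$ cut along $S_j$ need not be essential, need not even lie in a single piece of the decomposition (it can run through other thick and thin surfaces), so ``any remaining circle would yield a weak reducing pair'' is not an argument, and no induction on a complexity is sketched that would make it one. The standard repair reverses the order of operations: (i) thin surfaces are incompressible (Proposition 5.1); (ii) each piece $M_j$ is irreducible, because an essential sphere in $M_j$ would, by Haken's lemma, make $V_j\cup_{S_j}W_j$ reducible and hence weakly reducible (genus $\ge 2$), contradicting strong irreducibility; (iii) an innermost-circle argument using (i) and (ii) isotopes $\widetilde{F}$ off all thin surfaces, so it lies in one piece, which by (ii) is impossible unless some $F_j$ has a $2$-sphere component (Remark 4.1, not 5.1). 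Note this gives the statement you actually need --- if $M$ is reducible then some thin level of $K$ meets $K$ in exactly two points --- whereas your stronger claim, that every essential sphere is isotopic to a thin-level sphere because a compression body with sphere components of $\partial_-$ contains no other essential spheres, is false as stated: a sphere enclosing two sphere components of $\partial_-$ bounds a ball-minus-two-balls on one side and is neither inessential nor boundary-parallel. Finally, the inheritance of strong irreducibility after capping the sphere boundary with a ball, and the identification of the capped splitting with the one induced by the thin presentation of each summand, are asserted but are the kind of routine verifications that should be recorded for the induction to close.
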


Combining the results in Section $5$ and Theorem 6.1, we have the following.

\begin{theorem}
If a knot $K$ is in thin position and satisfies the rectangle condition, then $w(K)=w(K_1)+\ldots+w(K_n)-2(n-1)$ for prime summands $K_1,\ldots,K_n$ of $K$.
\end{theorem}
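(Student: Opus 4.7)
The plan is to combine Proposition 5.1 with the quoted Theorem 6.1, so that Theorem 6.2 falls out essentially as a corollary. By hypothesis $K$ is in thin position and satisfies the rectangle condition; Proposition 5.1 (together with the definition of the rectangle condition for a link given in Section 5) then tells us that the generalized Heegaard splitting of the $2$-fold branched cover of $(S^3,K)$ induced by this thin position is strongly irreducible. This is exactly the hypothesis needed to invoke Theorem 6.1, which yields the key structural statement: the given thin position of $K$ is the vertical connect sum of thin positions of the prime summands $K_1,\ldots,K_n$.

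With that structural decomposition in hand, the width formula is a direct count of critical points. The standard construction (recalled at the start of Section 6) of stacking a thin position of $K_1$ above a thin position of $K_2$ and joining them by a short vertical arc cancels one local minimum of $K_1$ against one local maximum of $K_2$, so the resulting embedding has width $w(K_1)+w(K_2)-2$. Iterating this for the $n$-fold vertical stack $K_1\#\cdots\#K_n$ produced by Theorem 6.1 gives an embedding of width $w(K_1)+\cdots+w(K_n)-2(n-1)$. Since Theorem 6.1 identifies this very vertical stack with the thin position of $K$, its width equals $w(K)$, yielding
\[
w(K)=w(K_1)+\cdots+w(K_n)-2(n-1).
\]

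I do not expect any genuine obstacle in carrying this out: Sections 3 and 5 supply strong irreducibility from the rectangle condition, and Theorem 6.1 supplies the passage from strong irreducibility in the $2$-fold branched cover to the vertical connect sum decomposition in $S^3$, already for an arbitrary number of prime summands. The only minor point worth flagging is that one must cite Theorem 6.1 in its full form (decomposing $K$ at once into all of its prime summands $K_1,\ldots,K_n$) rather than iterating a two-summand version, so that no auxiliary argument is needed to propagate either the thin position property or the rectangle condition to intermediate partial decompositions.
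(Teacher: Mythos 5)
Your proposal is correct and follows exactly the route the paper takes: the rectangle condition (via Section 3 and Section 5) makes the induced generalized Heegaard splitting of the $2$-fold branched cover strongly irreducible, and then Theorem 6.1 gives the vertical connect sum structure and hence the width formula. The paper in fact gives no separate proof beyond the remark ``Combining the results in Section 5 and Theorem 6.1,'' so your write-up, including the explicit width count for the vertical stack, matches and slightly elaborates the intended argument.
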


\end{document}